\newtheorem{theorem}{Theorem}[section]
\newtheorem{lemma}[theorem]{Lemma}
\newtheorem{prop}[theorem]{Proposition}
\theoremstyle{definition}
\newtheorem{defini}[theorem]{Definition}
\theoremstyle{remark}
\numberwithin{equation}{section}
\newcommand{\conv}{\operatorname{Conv}}
\newcommand{\bigO}[1]{\operatorname{O}\left(#1 \right)}
\title{On the Connectedness and Diameter of a Geometric Johnson Graph\thanks{
Part of the work was done in the 2nd Workshop on Discrete Geometry and its Applications. Oaxaca, Mexico, September 2009.}}
\author{
C. Bautista-Santiago \footnotemark[2]
\and J. Cano \footnotemark[2]
\and R. Fabila-Monroy \footnotemark[3]  \footnotemark[7]
\and D. Flores-Pe\~naloza \footnotemark[4]
 \and H. Gonz\'alez-Aguilar \footnotemark[2]
 \and D. Lara \footnotemark[5]
 \and E. Sarmiento \footnotemark[3]
 \and J. Urrutia \footnotemark[2] \footnotemark[6]
 }
\begin{document}

\maketitle

\def\thefootnote{\fnsymbol{footnote}}
\footnotetext[2]{Instituto de Matem\'aticas, Universidad Nacional Aut\'onoma de M\'exico, M\'exico.}
\footnotetext[3]{Departamento de Matem\'aticas, Centro de Investigaci\'on y de Estudios Avanzados del Instituto Polit\'ecnico Nacional, M\'exico.}
\footnotetext[4]{Departamento de Matem\'aticas, Facultad de Ciencias, Universidad Nacional Aut\'onoma de M\'exico, M\'exico.}
\footnotetext[5]{Universidad Aut\'onoma Metropolitana Azcapotzalco, Departamento de Sistemas.}
\footnotetext[6]{Partially supported by CONACyT of Mexico, grant CB-2007/80268.}
\footnotetext[7]{Corresponding author: \tt ruyfabila@math.cinvestav.edu.mx}

\begin{abstract}
Let $P$ be a set of $n$ points in general position 
in the plane.
A subset $I$ of $P$ is called an \emph{island}
if there exists a convex set $C$ such that $I = P \cap C$.
In this paper we define the
\emph{generalized island Johnson graph} of $P$ as the
graph whose vertex consists of all islands of $P$
of cardinality $k$, two of which are adjacent
if their intersection consists of exactly
$l$ elements. We show that  for large enough values of $n$, this graph 
is connected, and give upper and lower bounds
on its diameter.

\end{abstract}

\textbf{Keywords:} Johnson graph, intersection graph, diameter, connectedness, islands.

\section{Introduction}

Let $[n]:=\{1, 2, \ldots , n\}$ and let $k \leq n$ be a positive integer. 
A $k$-subset of a set is a subset of $k$ elements.
The \emph{Johnson graph}
$J(n,k)$ is the graph whose vertex set consists
of all $k$-subsets of $[n]$, two of which are adjacent
if their intersection has size $k-1$.
The \emph{Kneser graph} $K(n,k)$ is the graph whose
vertex set consists of all $k$-subsets of $[n]$,
two of which are adjacent if they are disjoint.
The \emph{generalized Johnson graph} $GJ(n,k,l)$ is the graph 
whose vertex set consists of all $k$-subsets of $[n]$, two
of which are adjacent if they have exactly $l$ elements in common.
Thus $GJ(n,k,k-1)=J(n,k)$ and $GJ(n,k,0)=K(n,k)$.
Johnson graphs have been widely studied in the literature.
This is in part for their applications in Network Design---where
connectivity and diameter\footnote{A graph is connected if there is 
a path between any pair of its vertices. The distance
between two vertices is the length of the shortest path
joining them. The diameter is the maximum distance
between every pair of vertices of a graph.} are of importance. (Johnson
graphs have small diameter and high connectivity.)
Geometric versions of these graphs have been defined in the literature.
In~\cite{hurtado} the chromatic
numbers of some ``geometric type Kneser graphs'' were studied.
In this paper we study the connectedness and diameter
of a ``geometric'' version of the generalized Johnson graph.

Let $P$ be a set of $n$ points in the plane.
A subset $I\subseteq P$ is called an \emph{island} if there exists a convex set $C$ such that $I=P \cap C$.
We say that $I$ is a \emph{$k$-island} if it has cardinality $k$ (see Figure \ref{fig:island}).
Let $0\le l < k \leq n$ be integers. The \emph{generalized island Johnson graph} $IJ(P,k,l)$ is
the graph whose vertex set consists of all $k$-islands of $P$,
two of which are adjacent if  their intersection
has exactly  $l$ elements. Note that $IJ(P,k,l)$ is an induced
subgraph of $GJ(n,k,l)$. If $P$ is in convex position,
then $IJ(P,k,l)$ and $GJ(n,k,l)$ are isomorphic---since in this case every subset of
$k$ points is a $k$-island.

\begin{figure}
\centering
\mbox{\subfigure{\includegraphics[scale=0.8]{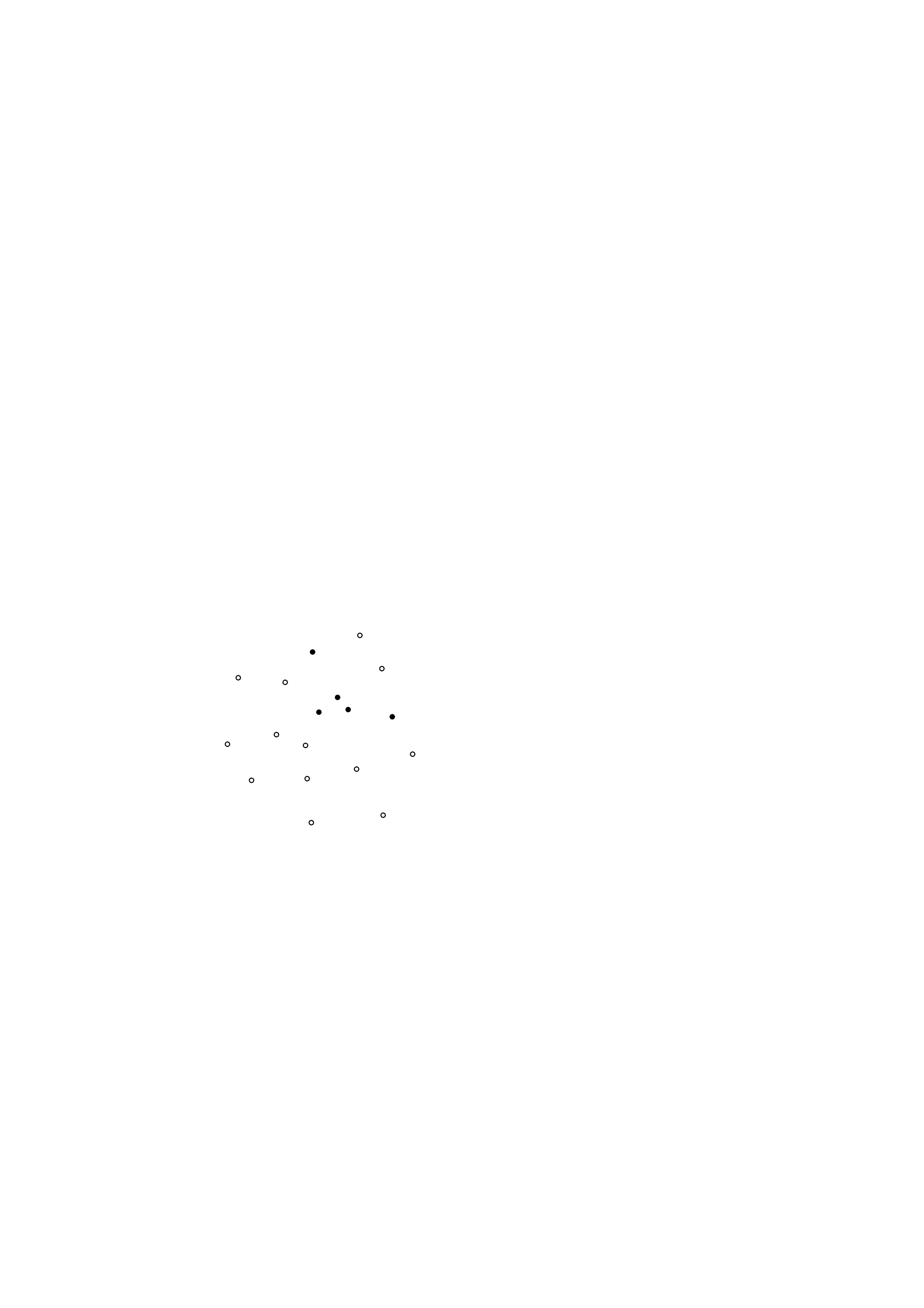}}\quad \quad \quad \quad \quad
\subfigure{\includegraphics[scale=0.8]{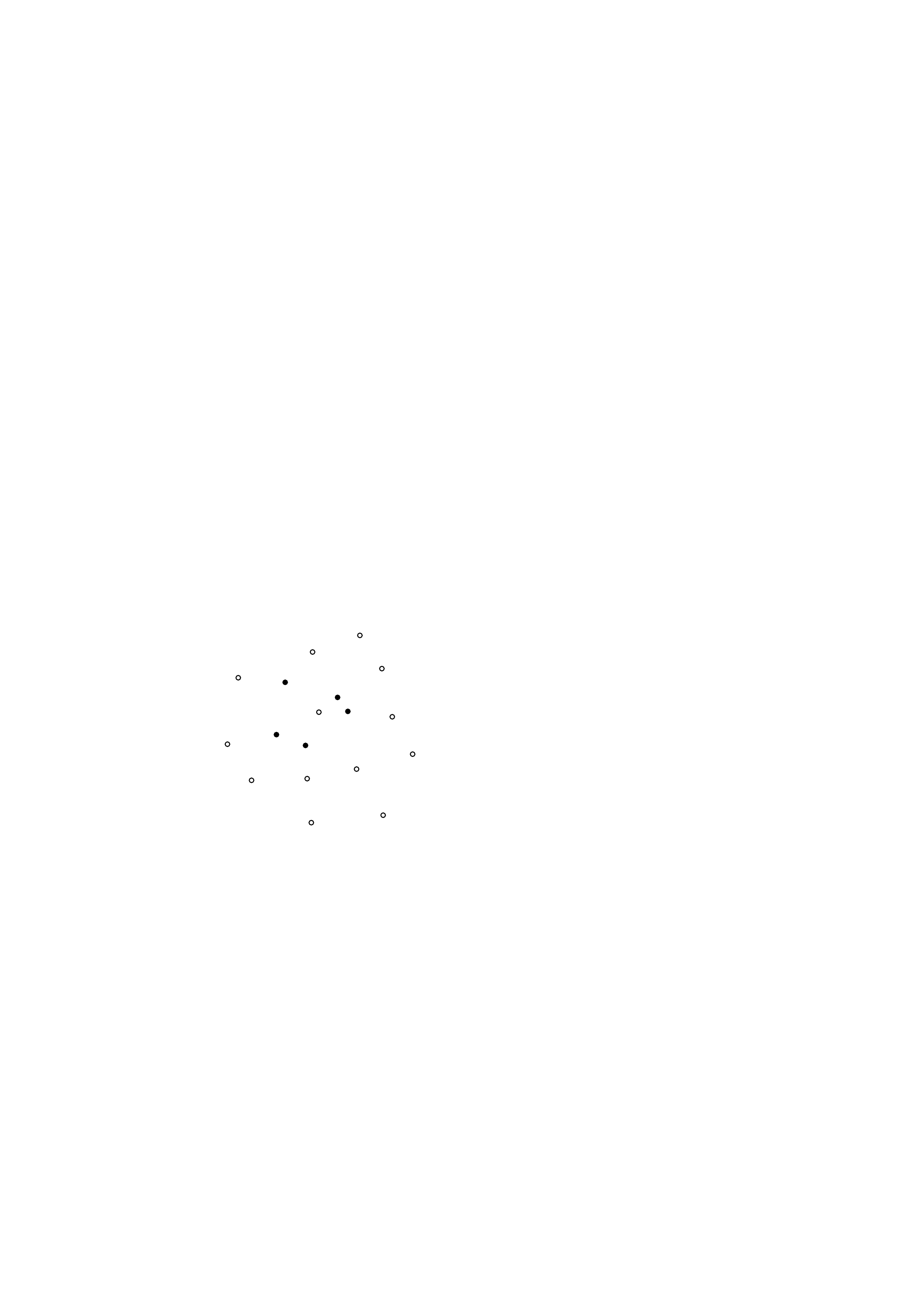}}}
\caption{A subset of $5$ points which is a $5$-island, and a subset of $5$ points which is not (both
painted black).} \label{fig:island}
\end{figure}


Graph parameters of $IJ(P,k,l)$ can be translated
to problems in Combinatorial Geometry of point sets. Here are some examples.
\begin{itemize}
\item[-]The number of vertices of this graph is the number
of $k$-islands of $P$---the problem of estimating this number
was recently studied in \cite{clemens_ruy}.

\item[-]An empty triangle of $P$ is a triangle with vertices
on $P$ and without points of $P$ in its interior.
The empty triangles of $P$ are precisely
its $3$-islands (or the number
of vertices in $IJ(P,3,l)$). Counting them has been
a widely studied problem \cite{emptysimplices,smallnumber,fewemptydumi,katmeir,minvaltr}.

\item[-] A related question~\cite{BK} is:
What is the maximum number of empty triangles
that can share an edge? This translates
to the problem of determining the clique number
of $IJ(P,3,2)$.
\end{itemize}

The paper is organized as follows.
In Section \ref{sec:conec}, we prove
that $IJ(P,k,l)$ is connected
 when $n$ is large enough with respect
 to $k$ and $l$. The proof of this result
implies an upper bound of $\bigO{\frac{n}{k-l}}+\bigO{k-l}$
on the diameter of this graph. In Section~\ref{sec:diam}, we
improve this bound for the case when $l \le k/2$, where we show that
the diameter is at most $\bigO{\log n}+\bigO{k-l}$.
We also exhibit a choice of $P$ for which $IJ(P,k,l)$ has diameter
at least $\Omega\left(\frac{\log n-\log k}{\log(k-l)}\right)$.
Note that these bounds are asymptotically tight when
$l \le k/2$ and, $k$ and $l$ are constant with respect to
$n$. A preliminary version of this paper appeared in \cite{eurocg10}.


\section{Connectedness}\label{sec:conec}

In this section we prove the following Theorem.

\begin{theorem}\label{teo:main}
 If $n > (k-l)(k-l+1)+k$, then $IJ(P,k,l)$
is connected and its
diameter is $\bigO{\frac{n}{k-l}}+\bigO{k-l}$.
\end{theorem}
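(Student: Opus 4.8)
The plan is to establish connectedness and the diameter bound simultaneously by showing that any $k$-island can be transformed into a fixed canonical $k$-island through a bounded number of adjacency steps. First I would fix a reference direction, say the $x$-axis (after a generic rotation so that no two points share an $x$-coordinate, which is possible by general position), and order the points of $P$ as $p_1, p_2, \ldots, p_n$ from left to right. The natural canonical target is the leftmost $k$-island $L = \{p_1, \ldots, p_k\}$, which is an island because a vertical slab or a large halfplane bounded on the right can isolate exactly the $k$ leftmost points. The strategy is then to show that every $k$-island $I$ can be connected to $L$ by a path of length $\bigO{\frac{n}{k-l}} + \bigO{k-l}$.

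The key mechanism is a single ``sliding'' step: given a $k$-island $I$, I want to produce an adjacent $k$-island $I'$ (sharing exactly $l$ points with $I$) that is, in a suitable sense, further to the left. The constraint that $|I \cap I'| = l$ means each step must drop $k-l$ of the current points and introduce $k-l$ new ones. Here is where the hypothesis $n > (k-l)(k-l+1) + k$ should enter: it guarantees enough ``room'' to the left (or among the unused points) so that a valid island realizing the required intersection of size exactly $l$ always exists. Concretely, I would take the $k-l$ rightmost points of $I$, discard them, and replace them with $k-l$ fresh points chosen to keep the resulting set convex-separable while overlapping $I$ in precisely the $l$ leftmost points. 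The counting condition ensures the pool of available replacement points is large enough that such a convex island exists and that we never get stuck; the $(k-l+1)$ factor plausibly accounts for needing a full fresh block plus separation guarantees. Each such step advances a left-to-right potential (for instance, the sum of ranks of the points in the island, or the rank of the rightmost point) by a definite amount, so after $\bigO{\frac{n}{k-l}}$ steps the island is pushed entirely into the leftmost region.

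Once the island is confined to a leftmost block of size close to $k$, the points there may be treated essentially as being in convex position locally, so that $IJ$ behaves like the ordinary generalized Johnson graph $GJ$ on a bounded set; the remaining $\bigO{k-l}$ steps would then reconfigure within this block to reach exactly $L$, using the known connectivity of $GJ(m,k,l)$ for $m$ a constant multiple of $k$. Adding the two contributions gives the claimed diameter $\bigO{\frac{n}{k-l}} + \bigO{k-l}$, and connectedness follows since every vertex reaches the common target $L$.

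The main obstacle, I expect, is proving that the sliding step is always executable while maintaining the \emph{exact} intersection size $l$ and the island (convexity) property at once. Demanding equality $|I \cap I'| = l$ rather than ``at most $l$'' is delicate: replacing points one at a time would change the intersection by one each time and might be forced through forbidden intermediate sizes, while swapping a whole block of $k-l$ points at once requires simultaneously certifying convex separability of the new configuration. I anticipate that the precise threshold $(k-l)(k-l+1)+k$ is exactly what is needed to push a carefully chosen block-swap through, and verifying that the new set is genuinely an island — i.e., exhibiting the separating convex set — is the technical heart of the argument.
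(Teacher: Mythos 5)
Your high-level shape (a potential that decreases by a definite amount per step, giving $\bigO{\frac{n}{k-l}}$ steps to reach a canonical family, plus an $\bigO{k-l}$ endgame) matches the paper's, but both of the steps you yourself flag as ``the technical heart'' are left unproved, and as stated they would fail. The sliding step is the first problem: if you retain the $l$ leftmost points of $I$ and adjoin $k-l$ fresh points further to the left, there is no reason the result is an island --- the $l$ retained points of $I$ need not be consecutive in the left-to-right order, and any convex set containing them together with points far to their left will in general capture other points of $P$. The paper's Shrinking Lemma (Lemma~\ref{lem:shrk}) avoids this by working radially around the topmost point $p_0$ and, crucially, by \emph{not} insisting on keeping a prescribed $l$-subset: it covers $P\setminus\{p_0\}$ by at most $k-l+1$ maximal intervals each containing exactly $l$ consecutive elements of $A$, uses the hypothesis $n>(k-l)(k-l+1)+k$ as a pigeonhole bound to find one such interval with at least $k-l$ points outside $A$, and then takes those $l$ points of $A$ together with the $k-l$ outside points closest to their convex hull. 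The freedom to choose \emph{which} $l$ points survive, and the ``closest to $\conv(J)$'' selection inside a single wedge, is exactly what certifies the island property and the exact intersection size; this is precisely the content your proposal defers.

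The endgame is the second problem. A leftmost block of roughly $(k-l)(k-l+1)+k$ points of a general point set is not in convex position, so $IJ$ restricted to it is not a generalized Johnson graph, and you cannot invoke connectivity of $GJ(m,k,l)$. Worse, the canonical family is genuinely delicate: for points on a line, $IJ(S,k,l)$ with $l\ge 1$ is \emph{disconnected} --- it decomposes into $k-l$ vertex-disjoint paths indexed by the residue of the interval's endpoint modulo $k-l$ (Proposition~\ref{prop:collinear-points}). The paper's canonical family is instead the set of projectable islands (consecutive points in the radial order, with or without $p_0$), whose induced subgraph is isomorphic to $IJ(S',k,l)$ for $S'$ a collinear set plus one apex point; it is precisely the apex point that links the residue classes (Lemma~\ref{lem:grid}), and proving connectedness and the $\bigO{\frac{n-k}{k-l}}+\bigO{k-l}$ diameter of this subgraph is a separate, necessary argument (Theorem~\ref{G_kl_S'_connected} and Proposition~\ref{prop:diamL}). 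So the two places where you say the real work lies are indeed where it lies, and neither is routine; the proposal does not yet contain a proof.
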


The proof is divided in two parts:

\begin{itemize}
\item First we prove that $IJ(P,k,l)$ contains a connected subgraph $\mathcal{F}$
of diameter $\bigO{\frac{n-k}{k-l}}+\bigO{k-l}$.

\item Next we prove that for every vertex in $IJ(P,k,l)$ there is a path of
length at most $\bigO{\frac{n}{k-l}}$ connecting it to a  vertex in $\mathcal{F}$.

\end{itemize}

\subsection{$IJ(P,k,l)$ contains a connected subgraph}

Let $P:=\{p_0, p_1,\ldots,p_{n-1}\}$ be a set of $n$ points in general
position in the plane. So that
$p_0$ is the topmost point of $P$,
and $p_1,\ldots, p_{n-1}$
are sorted counterclockwise by angle around $p_0$. 
For $0\le i\le j \le n$, let $P_{i,j}:=\{p_i, p_{i+1}, \ldots, p_j \}$ and let $P'_{i,j}:= P_{i,j} \cup \{p_0\}$.
Observe that $P_{i,j}$ and $P'_{i,j}$ are both islands of $P$. 
We call these two types of islands \emph{projectable}.
Projectable islands are those islands that can be ``projected'' 
to a horizontal line, in such a way that its points are consecutive in the image of the whole set;
see Figure \ref{fig:projectable}.
Let $\mathcal{F}$ be the subgraph of $IJ(P,k,l)$ induced by the projectable
$k$-islands of $P$. 
Let $S$ be a set of $n-1$ points on a horizontal line $h$, and let $S':=S \cup \{x\}$, where $x$ is a point not in $h$. 
It is not hard to see that $\mathcal{F}$ is isomorphic to $IJ(S',k,l)$. We  classify the islands of $S'$ into two types: those that contain $x$, and those that do not. Notice that these two types correspond to the two types of  projectable islands of $P$. 

\begin{figure}
\centering
\mbox{\subfigure[Projectable.]{\includegraphics[scale=1.0]{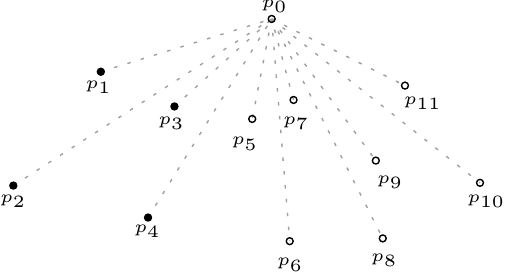}} \quad \quad \quad
\subfigure[Non projectable.]{\includegraphics[scale=1.0]{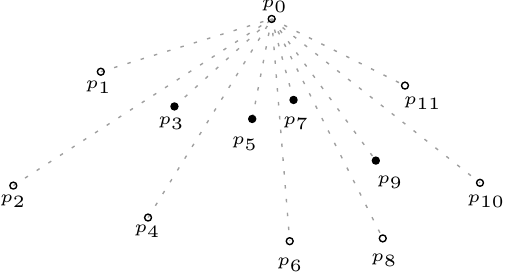}}}
\caption{A $4$-island which is projectable, and a $4$-island which is not projectable.} \label{fig:projectable}
\end{figure}

Now we show that $IJ(S',k,l)$ is connected.
 First we consider the subgraph of $IJ(S',k,l)$ induced
by those islands of $S'$ that do
not contain $x$. Note that this is precisely $IJ(S,k,l)$.
Without loss of generality assume that $S$ is a set  $x_1 < x_2 < \cdots < x_{n-1}$ of points on the real line.
Observe that a $k$-island of $S$ is an \emph{interval}
of $k$ consecutive elements $\{x_i, \ldots , x_{i+k-1}\}$. 
For the sake of clarity, in what follows we refer to $k$-islands of $S$ as $k$-intervals.

Two $k$-intervals of $S$ are adjacent
in $IJ(S,k,l)$ if they overlap in exactly $l$ elements.
It follows easily that if $l>0$, each $k$-interval 
is adjacent to at most two different $k$-intervals, one containing its first element,
and the other containing its last; see Figure~\ref{figone}.
Since $IJ(S,k,l)$ has no cycles and its maximum
degree is at most two, it is a union of pairwise disjoint paths. These paths can be described as follows.
For $i<j$, let $A_i$ and $A_j$ be the intervals ending at 
$x_i$ and $x_j$ respectively. There is a path between $A_i$ and $A_j$ if and only if $i \equiv j \mod(k-l)$. 
For consider the interval adjacent with $A_i$ to its right, 
this interval must end at point $x_{i+(k-l)}$ 
(leaving exactly $l$ points on the intersection). 
On the other hand, the interval adjacent with $A_i$ to its left, 
must end at point $x_{i-(k-l)}$; see Figure~\ref{figone}.
For  $0 \leq r < k-l$, let  $\mathcal{P}_r$ be the 
subgraph of $IJ(S,k,l)$ induced by those $k$-intervals
ending at a point with index congruent to $r \mod (k-l)$.
Thus we have:

\begin{figure}
\begin{center}
  \includegraphics[scale=0.8]{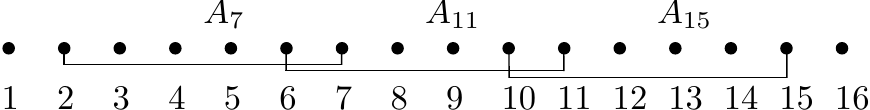}
  \caption{Three different $k$-intervals, for $|S|=16, k=6, l=2$.}\label{figone}
\end{center}
\end{figure}

\begin{figure}
\begin{center}
 \includegraphics[scale=0.8]{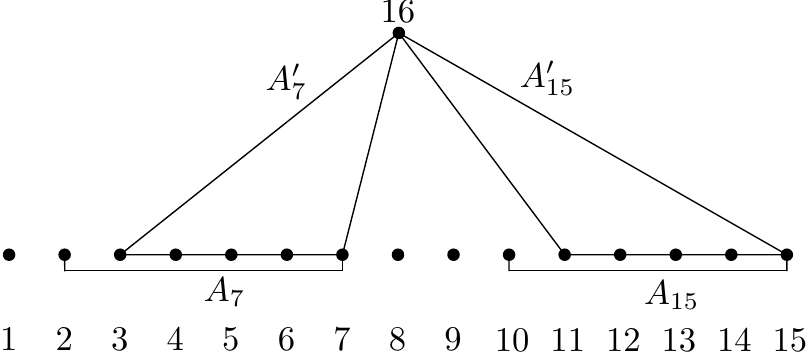}
  \caption{Four different $k$-islands in $S'$, for $|S'|=16, k=6, l=2$.}\label{fig:both_types_of_islands}
\end{center}
\end{figure}

%

\begin{prop}\label{prop:collinear-points}
If $l>0$, $\mathcal{P}_r$ is an induced path of $IJ(S,k,l)$.
Moreover $IJ(S,k,l)$ is the union of $\{\mathcal{P}_r\,|\,0\le r < k-l\}$. \qed
\end{prop}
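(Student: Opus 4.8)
The plan is to make the correspondence between $k$-intervals and their right endpoints completely explicit, reducing both assertions to an elementary statement about an arithmetic progression of indices.

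First I would identify each $k$-interval with the index of its last point: the interval $\{x_{j-k+1},\ldots,x_j\}$ is recorded by $j$, which ranges over $k\le j\le n-1$. The heart of the argument is a single computation. For $i<j$, the interval ending at $x_i$ and the interval ending at $x_j$ intersect in $\{x_{j-k+1},\ldots,x_i\}$ when $j-i\le k-1$ and are disjoint otherwise; hence their overlap has size $\max\{0,\,k-(j-i)\}$. Imposing that this equals $l$ and using the hypothesis $l>0$ forces $k-(j-i)=l$, that is $j-i=k-l$. Thus, for $l>0$, two $k$-intervals are adjacent in $IJ(S,k,l)$ if and only if the difference of their right-endpoint indices is exactly $k-l$. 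It is precisely here that $l>0$ is needed: for $l=0$ the equation $\max\{0,\,k-(j-i)\}=0$ has the many solutions $j-i\ge k$, and the neat path structure breaks down.

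Next I would fix $r$ with $0\le r<k-l$ and describe $\mathcal{P}_r$. Its vertices are the $k$-intervals whose right-endpoint index $j$ satisfies $k\le j\le n-1$ and $j\equiv r\pmod{k-l}$; these indices form an arithmetic progression $j_1<j_2<\cdots<j_m$ with common difference $k-l$. By the adjacency characterization, two such vertices are joined exactly when their indices differ by $k-l$, i.e.\ exactly when they are consecutive terms $j_t,j_{t+1}$; any two non-consecutive terms differ by a multiple of $k-l$ strictly larger than $k-l$, and are therefore non-adjacent. Hence the subgraph induced by these vertices is precisely the path $j_1 j_2\cdots j_m$, proving that $\mathcal{P}_r$ is an induced path.

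Finally, for the union statement I would note that every $k$-interval has a unique right-endpoint index, whose residue modulo $k-l$ lies in $\{0,\ldots,k-l-1\}$, so the vertex sets of the $\mathcal{P}_r$ partition $V(IJ(S,k,l))$. Moreover any edge joins two intervals whose indices differ by $k-l$, hence share the same residue, so both endpoints lie in the same $\mathcal{P}_r$ and there are no edges between distinct classes. Therefore $IJ(S,k,l)$ is the disjoint union of the induced paths $\mathcal{P}_r$. I do not expect any real obstacle here: the only point requiring care is the overlap computation together with the explicit use of $l>0$, after which both claims are immediate bookkeeping.
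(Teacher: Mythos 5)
Your proof is correct and follows essentially the same route as the paper: both arguments reduce adjacency of $k$-intervals to the condition that their right-endpoint indices differ by exactly $k-l$ (which is where $l>0$ enters), and then read off the path decomposition by residue class. Your version merely makes the overlap computation $\max\{0,\,k-(j-i)\}=l$ explicit where the paper asserts it ``follows easily.''
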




For $l=0$ and $n \geq 3k-1$, every $k$-interval would 
either intersect the left-most $k$-interval or
the right-most $k$-interval, but not both. In this
case   $IJ(S,k,0)$ is connected and its diameter is at most $3$.
Note that except for some special cases, $IJ(S,k,l)$
is disconnected. Remarkably, as we show next, for a large enough value of $n$, 
the addition of one extra point makes the graph connected. 

As before  $A_i$ is the $k$-island ($k$-interval) that ends at point $x_i$ and does not contain $x$.
Let $A'_i$ be the $k$-island ending at point $x_i$ and containing $x$; 
see Figure~\ref{fig:both_types_of_islands}. The structure of $IJ(S',k,l)$ when $l<2$ is different
from when $l\ge 2$.  In what follows, we
assume that $l \ge 2$ and  briefly discuss the case $l <2$ at the
end of this section. Note that the subgraph of $IJ(S',k,l)$ induced by the $A_i$'s is
precisely $IJ(S,k,l)$. While the subgraph induced
by the islands $A'_i$ is isomorphic to $IJ(S,k-1,l-1)$. 
From these observations, the following lemma is not hard to prove:

\begin{lemma}\label{lem:grid}
If $l\geq 2$, then  in $IJ(S',k,l)$:
\begin{enumerate}
\item $A_i$ is adjacent to $A'_{i-(k-l)}$  and $A'_{i+(k-l)-1}$ (if they exist).
\item $A'_i$ is adjacent to $A_{i+(k-l)}$ and $A_{i-(k-l)+1}$ (if they exist).
\end{enumerate}
\end{lemma}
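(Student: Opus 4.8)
The plan is to prove the lemma by a direct computation with the index intervals that describe the two families of islands, since all the structural observations needed have already been recorded just before the statement. First I would fix explicit descriptions. Writing each point of $S$ as $x_i$ with its index $i$, the island $A_i$ (which does not contain $x$) is the set of $k$ consecutive line points ending at $x_i$, i.e.\ it corresponds to the index interval $[i-k+1,\,i]$. The island $A'_i$ (which contains $x$) consists of $x$ together with the $k-1$ consecutive line points ending at $x_i$, i.e.\ $x$ together with the index interval $[i-k+2,\,i]$. The key—and only—place where the two types differ is that $A_i$ carries $k$ line points while $A'_j$ carries only $k-1$; this is what produces the asymmetric shifts $-(k-l)$ and $+(k-l)-1$ in the statement.

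Next I would reduce the adjacency condition to an intersection of intervals. Since $x\notin A_i$, the point $x$ never lies in $A_i\cap A'_j$, so every common element of $A_i$ and $A'_j$ is a line point, and the intersection is exactly the intersection of the index intervals $[i-k+1,i]$ and $[j-k+2,j]$, namely $[\max(i-k+1,\,j-k+2),\ \min(i,j)]$. Its cardinality is therefore $\min(i,j)-\max(i-k+1,\,j-k+2)+1$ whenever this quantity is nonnegative, and adjacency in $IJ(S',k,l)$ means this cardinality equals $l$.

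Then I would simply evaluate this at the two claimed values of $j$. For $j=i-(k-l)\le i-1$ one has $\min(i,j)=j$ and, using $l<k$, $\max(i-k+1,\,j-k+2)=i-k+1$, so the size is $j-(i-k+1)+1=l$. For $j=i+(k-l)-1\ge i$ one has $\min(i,j)=i$ and $\max(i-k+1,\,j-k+2)=j-k+2$, so the size is $i-(j-k+2)+1=l$. This establishes part~1. (The same computation carried out for a general $j$ shows that these are in fact the \emph{only} $A'$-neighbours of $A_i$, which is the ``grid'' picture the lemma is named for, though the statement only asserts existence.) Part~2 is not a separate computation: the relation ``$A_i$ is adjacent to $A'_j$'' is symmetric, so solving $j=i-(k-l)$ and $j=i+(k-l)-1$ for $i$ in terms of $j$ immediately gives that $A'_j$ is adjacent to $A_{j+(k-l)}$ and to $A_{j-(k-l)+1}$, which is exactly part~2 after renaming.

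I expect the only real hazard to be keeping the off-by-one constants straight: because one family carries $k$ line points and the other $k-1$, the two shifts are genuinely different, and it is easy to drop the $+1$ and arrive at a symmetric (and wrong) pair of neighbours. There is no deeper difficulty—the hypothesis $l\ge 2$ is not needed for this interval arithmetic itself (it is used instead for the companion fact that the $A'_i$ form paths via the isomorphism with $IJ(S,k-1,l-1)$), so the lemma really does reduce to the two line-by-line evaluations above.
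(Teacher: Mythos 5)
Your computation is correct and is exactly the ``not hard to prove'' verification the paper leaves to the reader: it states Lemma~\ref{lem:grid} without proof, relying on the preceding observations that $A_i$ is the $k$-interval ending at $x_i$ and $A'_i$ is $x$ together with the $(k-1)$-interval ending at $x_i$, which is precisely the setup you formalize. Your interval arithmetic, the symmetry argument for part~2, and the side remarks (uniqueness of the $A'$-neighbours, and that $l\ge 2$ is needed only for the surrounding structural discussion rather than for this lemma) are all accurate.
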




The following theorem provides sufficient and necessary conditions
for \\$IJ(S',k,l)$ to be connected.

\begin{theorem}\label{G_kl_S'_connected}
For $l \geq 2$, the graph $IJ(S',k,l)$ is connected if and only if
$n\geq 3k-2l-1$ or $n=k$.
\end{theorem}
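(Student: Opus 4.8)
The plan is to recoordinatize the islands so that $IJ(S',k,l)$ becomes a \emph{distance graph} on a block of consecutive integers, after which connectedness becomes a transparent number-theoretic question. Write $d:=k-l\ge 1$. Recall that $A_i$ exists for $k\le i\le n-1$ and $A'_i$ for $k-1\le i\le n-1$. Define $\phi(A_i):=2i$ and $\phi(A'_i):=2i+1$. Then $\phi$ is a bijection from the vertex set of $IJ(S',k,l)$ onto the integer interval $\{2k-1,2k,\dots,2n-1\}$, of size $M:=2(n-k)+1$; note that $M$ is always \emph{odd}.

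Next I would translate every adjacency through $\phi$. By Proposition~\ref{prop:collinear-points}, $A_i\sim A_j$ iff $|i-j|=d$; applying the same proposition to the $A'$-subgraph, which is isomorphic to $IJ(S,k-1,l-1)$ and hence has step $(k-1)-(l-1)=d$ (this is exactly where $l\ge 2$ is used, so that Proposition~\ref{prop:collinear-points} applies to the $A'$ family), gives $A'_i\sim A'_j$ iff $|i-j|=d$; both correspond to images differing by $2d$. By Lemma~\ref{lem:grid}, the only $A'$-neighbours of $A_i$ are $A'_{i-d}$ and $A'_{i+d-1}$, and these are precisely the pairs whose images differ by $2d-1$. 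Conversely, every two integers of the block at distance $2d-1$ or $2d$ are realized by such a pair, so $IJ(S',k,l)$ is isomorphic to the distance graph $G$ on $M$ consecutive integers in which two integers are adjacent iff they differ by $2d-1$ or by $2d$.

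It then suffices to prove the clean claim: \emph{$G$ is connected iff $M=1$ or $M\ge 4d-1$}. This matches the theorem, since $n=k\iff M=1$, while $n\ge 3k-2l-1=k+(2d-1)\iff n-k\ge 2d-1\iff M\ge 4d-1$. For the connected direction the key point is that the two steps are consecutive, so $\gcd(2d-1,2d)=1$ and on $\mathbb{Z}$ one can shift by $\pm1$ via $(+2d)+(-(2d-1))$; the only issue is staying inside the window. Concretely, relabelling the block as $\{0,\dots,M-1\}$, every vertex $v\ge 2d$ is joined to $v-1$ through $v\to v-2d\to v-1$, so $\{2d-1,\dots,M-1\}$ is connected; dually $\{0,\dots,M-2d\}$ is connected; when $M\ge 4d-1$ these intervals overlap ($2d-1\le M-2d$) and cover everything, proving connectedness.

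The main obstacle is the disconnection direction, which pins down the exact threshold. For odd $M$ with $3\le M\le 4d-3$ (the relevant sub-threshold range; the even value $4d-2$ cannot occur), I would show that every vertex $v$ in the middle block $\{M-2d+1,\dots,2d-2\}$ has all four candidate neighbours $v\pm(2d-1)$ and $v\pm 2d$ outside $\{0,\dots,M-1\}$, hence is isolated. This block is nonempty precisely when $M\le 4d-3$, so it produces a second component and $G$ is disconnected; the degenerate case $M\le 2d-1$ (no edges at all) is immediate. The delicate points are the boundary $M=4d-1$ and the parity remark that renders $4d-2$ vacuous, so I would anchor all the index arithmetic on the endpoints $2k-1$ and $2n-1$ of the integer block rather than on the island indices, where off-by-one slips are easy.
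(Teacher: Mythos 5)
Your proof is correct, and it takes a genuinely different route from the paper's. The paper argues in two stages: given islands $I$ and $J$, it repeatedly applies Lemma~\ref{lem:grid} to move $I$ to an island whose endpoint is in the same residue class modulo $k-l$ as that of $J$ and which contains $x$ exactly when $J$ does, and then invokes Proposition~\ref{prop:collinear-points} to walk along the resulting induced path to $J$; its ``only if'' direction is the same isolated-vertex observation you make (an island containing $x$ with fewer than $k-l$ points of $S$ on each side), just phrased without coordinates. Your recoordinatization $\phi(A_i)=2i$, $\phi(A'_i)=2i+1$ collapses both stages into a single statement --- $IJ(S',k,l)$ is the distance graph with steps $2d-1$ and $2d$ on $M=2(n-k)+1$ consecutive integers --- after which both directions become window arithmetic: the two consecutive step lengths supply the $\pm1$ shifts for connectivity once $M\ge 4d-1$, and the isolated vertices are exactly the middle block $\{M-2d+1,\dots,2d-2\}$, nonempty precisely below the threshold. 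What this buys is a fully explicit derivation of the exact bound $n\ge 3k-2l-1$ (which in the paper emerges rather implicitly from ``as long as the intermediate islands exist'') together with a complete classification of the isolated vertices; the price is that you must check, as you do, that Lemma~\ref{lem:grid} lists \emph{all} cross-adjacencies and that the $A'$-family has step $d$, both of which hinge on $l\ge 2$. The index bookkeeping is right ($A_i$ for $k\le i\le n-1$, $A'_i$ for $k-1\le i\le n-1$, so $\phi$ is onto $\{2k-1,\dots,2n-1\}$), and the parity remark disposing of $M=4d-2$ correctly explains why the odd threshold $4d-1$ is exactly $n\ge 3k-2l-1$.
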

\begin{proof}
Let $I$ and $J$
be two $k$-islands of $S'$. As long as the intermediate
islands exists we can repeatedly use Lemma~\ref{lem:grid} to find a path 
from $I$ to an island whose endpoint is in the same 
residue class of $(k-l)$ as the endpoint of $J$, and that contains
$x$ if and only if $J$ does. This is the case whenever $n \ge 3k-2l-1$.
 Afterwards Proposition~\ref{prop:collinear-points} ensures that there is a path
from this island to $J$.

Suppose that $n < 3k-2l-1$, then there exists a
$k$-island containing $x$ and having
less than $k-l$ points to its left and less than
$k-l$ points to its right. This island is an isolated vertex
in $IJ(S',k,l)$. This sole vertex is all of $IJ(S',k,l)$ when
$n=k$ (in which case the graph is connected). However, if $n > k$, there
are at least two such $k$-islands.
\end{proof}

The proof of Theorem~\ref{G_kl_S'_connected} implicitly provides
the following bound on the diameter of $IJ(S',k,l)$.

\begin{prop}\label{prop:diamL}
If $IJ(S',k,l)$ is connected, then its diameter is
$\bigO{\frac{n-k}{k-l}}+\bigO{k-l}$. 
\end{prop}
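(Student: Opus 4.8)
The plan is to make the bound in Proposition~\ref{prop:diamL} an explicit reading of the path constructed in the proof of Theorem~\ref{G_kl_S'_connected}, by counting the number of edges used in each of its two phases. Fix two arbitrary $k$-islands $I$ and $J$ of $S'$; I must bound the distance between them by the claimed quantity. The construction proceeds in two phases: a ``transport'' phase using Lemma~\ref{lem:grid}, which moves $I$ along the line while toggling between the two island types, until its endpoint lands in the correct residue class modulo $k-l$ and it agrees with $J$ on whether it contains $x$; followed by a ``within-path'' phase, governed by Proposition~\ref{prop:collinear-points}, which slides along a single path $\mathcal{P}_r$ (or its $x$-containing analogue) to reach $J$ exactly.

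\begin{proof}
Let $I$ and $J$ be two $k$-islands of $S'$. Following the proof of Theorem~\ref{G_kl_S'_connected}, we build a path from $I$ to $J$ in two phases and bound the number of edges in each.

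For the first phase, suppose $I$ has endpoint $x_i$ and $J$ has endpoint $x_j$. By Lemma~\ref{lem:grid}, each step that moves from an island of one type to an island of the other type (from an $A$ to an $A'$ or back) changes the index of the endpoint by either $k-l$ or $-(k-l)+1$. Two consecutive such steps (an $A\to A'\to A$ move) shift the endpoint by one of $\{2(k-l), 1, 1-2(k-l)\}$; in particular, a forward $A\to A'$ step followed by a forward $A'\to A$ step advances the endpoint by exactly one index while returning to an island of the original type. Thus, using at most $O(k-l)$ such pairs of steps, we can adjust the residue class of the endpoint modulo $k-l$ to match that of $x_j$ and simultaneously arrange that the current island contains $x$ if and only if $J$ does. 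Since each pair involves two edges, this phase contributes $O(k-l)$ edges. The hypothesis $n \ge 3k-2l-1$ guarantees (as in the proof of the theorem) that all the intermediate islands invoked by Lemma~\ref{lem:grid} actually exist, so this phase is well-defined.

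After the first phase we have reached an island $I'$ lying on the same path as $J$ in the sense of Proposition~\ref{prop:collinear-points}: it has the same $x$-type as $J$ and its endpoint is congruent to that of $J$ modulo $k-l$. By Proposition~\ref{prop:collinear-points}, sliding along this path moves the endpoint by $k-l$ per edge, so the number of edges needed to travel from $I'$ to $J$ is at most the number of points between their endpoints divided by $k-l$, which is $O\!\left(\frac{n-k}{k-l}\right)$. Adding the two contributions gives a total of $O\!\left(\frac{n-k}{k-l}\right)+O(k-l)$ edges between $I$ and $J$. Since $I$ and $J$ were arbitrary, this bounds the diameter of $IJ(S',k,l)$, completing the proof.
\end{proof}

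The only point requiring care is the first phase: one must verify that the two alternating step-lengths from Lemma~\ref{lem:grid} generate enough flexibility to fix \emph{both} the residue class and the $x$-type within $O(k-l)$ steps, and that the needed intermediate islands do not run off the ends of $S'$. Both follow from the same inequality $n \ge 3k-2l-1$ used in Theorem~\ref{G_kl_S'_connected}, so I expect no genuine obstacle beyond bookkeeping.
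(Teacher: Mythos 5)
Your proof is correct and follows essentially the same two-phase route as the paper: $O(k-l)$ applications of Lemma~\ref{lem:grid} to fix the residue class modulo $k-l$ and the $x$-type, followed by at most $\left\lceil \frac{n-k}{k-l}\right\rceil$ steps along the path of Proposition~\ref{prop:collinear-points}. The only blemish is a small arithmetic slip in your list of composite shifts (an $A\to A'\to A$ pair shifts the endpoint by $0$, $2(k-l)-1$, or $1-2(k-l)$, i.e.\ by $0$ or $\mp 1$ modulo $k-l$, not by the values you list), but this does not affect the $O(k-l)$ bound for the first phase.
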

\begin{proof}
Suppose that $n \ge 3k-2l-1$, as otherwise the bound trivially holds.
Let $I$ and $J$ be two $k$-islands of $S'$. Note that
it takes at most $2(k-l)$ applications
of Lemma~\ref{lem:grid} to take $I$ to an island whose endpoint is in the same 
residue class of $(k-l)$ as the endpoint of $J$, and that contains
$x$ if and only if $J$ does. The path in  $IJ(S',k,l)$ or in $IJ(S,k,l)$---depending 
on whether $J$ contains $x$ or not--connecting this island
to $J$ has length at most $\left\lceil \frac{n-k}{k-l}\right\rceil$. Since
between the starting points of two consecutive intervals in
the same residue class there are $k-l$ points, none of these
subintervals, lies in the right-most.

\end{proof}

Finally we consider the case when $l<2$. As we mentioned before, if $l=0$ and $n\ge 3k-1$, 
$IJ(S,k,0)$ is connected and its diameter is at most $3$. This
 is the case also for   $IJ(S',k,0)$.
On the other hand, if $l=1$, then the islands containing
$x$ induce a graph isomorphic to $IJ(S,k-1,0)$.
From these observations and Lemma~\ref{lem:grid}, we get the following result. 

\begin{prop}
If $n \ge 3k$, $IJ(S',k,0)$  and $IJ(S',k,1)$ are connected
and of diameter at most $4$. \qed
\end{prop}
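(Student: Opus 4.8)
The plan is to reuse the two-type decomposition of $S'$ that drives this whole section. Every vertex of $IJ(S',k,l)$ is either a non-$x$ island $A_i$ (these induce $IJ(S,k,l)$) or an island $A'_i$ containing $x$ (these induce $IJ(S,k-1,l-1)$). A direct count of $|A_i\cap A'_j|$ shows that the cross adjacencies of Lemma~\ref{lem:grid} persist for $l\in\{0,1\}$: $A_i$ is adjacent to $A'_{i-(k-l)}$ and to $A'_{i+(k-l)-1}$ whenever these indices are admissible (for $l=0$ the islands $A'_j$ disjoint from $A_i$ form an entire range, of which these two are the extreme members). I would treat $l=0$ and $l=1$ separately, in each case reducing connectivity and the diameter bound to statements about these two induced subgraphs glued along the cross edges.

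For $l=0$ every $A'_i$ contains $x$, so no two of them meet in $0$ points; the $x$-islands form an independent set and each must reach the graph through a non-$x$ island. Let $L$ and $R$ be the left-most and right-most $k$-intervals of $S$. Since $n\ge 3k$ one checks that $L$ and $R$ are disjoint (hence adjacent) and that every island of $S'$, of either type, is disjoint from at least one of $L,R$. Then every vertex lies within distance $2$ of $L$ (a vertex adjacent to $R$ reaches $L$ across the edge $LR$), so $IJ(S',k,0)$ is connected of diameter at most $4$. This is the easy case.

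For $l=1$ the $x$-islands induce $IJ(S,k-1,0)$, which by the $l=0$ analysis applied to $(k-1)$-intervals (legitimate since $n\ge 3k$) is connected with small diameter; I will call this subgraph the \emph{hub}. Connectivity of the whole graph then follows because every non-$x$ island attaches to the hub: its right neighbor $A'_{i+k-2}$ exists unless $i\ge n-k+2$ and its left neighbor $A'_{i-k+1}$ exists unless $i\le 2k-3$, and for $n\ge 3k$ these two excluded ranges are disjoint, so at least one cross edge is always present. For the diameter, the only pairs not obviously within distance $4$ are pairs of non-$x$ islands $A_i,A_j$; each joins the hub by a single edge, so it suffices to bound by $2$ the hub distance between their chosen $x$-neighbors. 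The two ``universal'' $x$-islands $X_L=A'_{k-1}$ and $X_R=A'_{n-1}$ do most of the work: since $i\ge k$, $X_L$ is disjoint from every right neighbor $A'_{i+k-2}$, and symmetrically $X_R$ is disjoint from every left neighbor $A'_{j-k+1}$. This resolves every pair except when $A_i$ has only a right neighbor (a left-boundary island) and $A_j$ only a left neighbor (a right-boundary island).

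The main obstacle is exactly this last configuration. There I must bound the hub distance between $R_i=A'_{i+k-2}$ and $L_j=A'_{j-k+1}$ by $2$, i.e. find one block of $k-1$ consecutive points of $S$ avoiding both of their $(k-1)$-element $S$-parts; equivalently, I need the hub itself to have diameter $2$. Whether such a free block exists depends on how much room the union of the two $S$-parts leaves, and this is precisely the point where the lower bound on $n$ is genuinely consumed. I expect the honest threshold for ``diameter $2$ hub, hence distance at most $4$'' to be closer to $n\gtrsim 4k$ than to $3k$, because the naive ``two steps into the hub at each end'' estimate only gives distance $5$ in this boundary-to-boundary case. Consequently the regime with $n$ very near $3k$ is the delicate part of the statement and the place I would examine most carefully, either by exhibiting the required free block explicitly or by checking the finitely many boundary pairs directly.
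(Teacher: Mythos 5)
Your $l=0$ argument and your $l=1$ connectivity argument are both correct, and your reduction of the remaining question to the hub distance between $R_i=A'_{i+k-2}$ and $L_j=A'_{j-k+1}$ is the right reduction. The gap you flag at the end is therefore the only issue --- but it is a genuine one, and it cannot be closed: the boundary-to-boundary configuration really does produce pairs at distance $5$ when $n$ is near $3k$. Concretely, take $k=10$, $l=1$, $n=30$ (so $S$ has $29$ points), and the disjoint $10$-intervals $u=\{x_4,\dots,x_{13}\}$ and $v=\{x_{14},\dots,x_{23}\}$. Each has degree $2$: $N(u)=\bigl\{\{x_{13},\dots,x_{22}\},\;\{x\}\cup\{x_{13},\dots,x_{21}\}\bigr\}$ and $N(v)=\bigl\{\{x_5,\dots,x_{14}\},\;\{x\}\cup\{x_6,\dots,x_{14}\}\bigr\}$, and a direct computation shows the balls of radius $2$ around $u$ and $v$ are disjoint (that around $u$ contains only $x$-islands whose $S$-parts begin at index at most $5$ or at $13$, plus $\{x_{13},\dots,x_{22}\}$; that around $v$ contains none of these). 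Hence $d(u,v)\ge 5$, with equality witnessed by $u$, $\{x_{13},\dots,x_{22}\}$, $\{x\}\cup\{x_5,\dots,x_{13}\}$, $\{x\}\cup\{x_{14},\dots,x_{22}\}$, $\{x_5,\dots,x_{14}\}$, $v$. So the stated bound of $4$ fails at $n=3k$ for $l=1$; your guess that the honest threshold for diameter $4$ is around $n\gtrsim 4k$ is essentially right (your own free-block count shows the bad case requires $n\le 4k-O(1)$), and what your argument does prove for all $n\ge 3k$ is connectivity together with diameter at most $5$ (one edge into the hub at each end plus the hub's diameter, which is at most $3$).

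For comparison, the paper offers essentially no proof here: it records that the $x$-islands induce a copy of $IJ(S,k-1,0)$ and appeals to ``these observations and Lemma~\ref{lem:grid}'', without examining the case you isolate. Your write-up is more careful than the source and correctly pinpoints the one place where the stated constant is too optimistic. The fix is either to weaken the conclusion to diameter at most $5$, or to strengthen the hypothesis to, say, $n\ge 4k$, under which your $X_L$/$X_R$ argument plus the free-block count disposes of every pair. Neither change affects the rest of the paper, since this proposition is only invoked to give a constant diameter bound for $l<2$.
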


\subsection{Paths between projectable and non projectable islands}\label{sec:prove_shrk}

To finish the proof of Theorem~\ref{teo:main}, we prove that for any island of $P$, there is
a path connecting it to a projectable island. 
At the end of this section we present a first bound on the diameter of $IJ(P,k,l)$.

Recall that $P=\{p_0,p_1,\dots,p_{n-1}\}$; $p_0$ is its topmost
point and $p_1,\dots,p_{n-1}$ are sorted counterclockwise
by angle around $p_0$. 
Let $A$ be an island of $P$
such that $|A \setminus \{p_0\}| \geq 2$. 
Define the \emph{weight} of $A$
as the difference between the largest and the
smallest indices of the
elements of $A\setminus \{p_0\}$---an island of weight
$k-1$ is always projectable. The following lemma 
ensures the existence of a path between any island and a projectable island,
by eventually reducing the weight of any given island.

\begin{lemma}[Shrinking Lemma]\label{lem:shrk}
If $n > (k-l)(k-l+1)+k$, then  every non projectable
$k$-island $A$ of $P$ has a neighbor in $IJ(P,k,l)$ which is either
a projectable island or an island whose weight
is less than that of $A$ by at least $k-l$.
\end{lemma}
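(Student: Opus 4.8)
The plan is to produce the required neighbor by a sliding argument in the radial order around $p_0$, falling back on an explicit geometric construction when $A$ is too spread out. Throughout, write $A\setminus\{p_0\}=\{p_{i_1},\dots,p_{i_m}\}$ with $a=i_1<\dots<i_m=b$ and $w=b-a$; since $A$ is non-projectable there is a radial gap inside $[a,b]$, so $w\ge m$. The engine is the observation that each interval $W_i:=P_{i,i+k-1}$ is itself a projectable $k$-island, and that the counting function $f(i):=|A\cap W_i|$ changes by at most $1$ when $i$ is incremented. Hence if $f$ takes a value at most $l$ at one admissible index and a value at least $l$ at another, a discrete intermediate value argument yields $i^\star$ with $f(i^\star)=l$; then $W_{i^\star}$ meets $A$ in exactly $l$ points, so it is a projectable neighbor of $A$ and the first alternative of the lemma holds. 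The same reasoning applied to the windows $P'_{i,i+k-2}$ (those containing $p_0$) covers the case $p_0\in A$.

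First I would dispose of the generic case. Because $A$ consists of at most $k$ points while the hypothesis $n>(k-l)(k-l+1)+k$ leaves ample room in the radial order, there is an admissible window meeting $A$ in fewer than $l$ points (for instance one placed in a region of the circular order avoided by $A$), so $f$ attains a value at most $l$. If in addition some radial interval of length $k$ captures at least $l$ points of $A$, the intermediate value step above finishes the proof. Thus the only remaining situation is the \emph{spread-out} case, in which every radial interval of length $k$ meets $A$ in at most $l-1$ points; in particular $w$ is then large compared with $k$, which is precisely when there is room to shrink.

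In the spread-out case I would build the lower-weight neighbor by hand. Keep the $l$ lowest-index points of $A$ and discard the remaining $k-l$; the discarded points all have index greater than $i_l$, and the plan is to reattach $k-l$ fresh points of small index so that the new set $A'$ has maximum index at most $b-(k-l)$, giving weight at most $w-(k-l)$ as required. The delicate part, and the main obstacle, is geometric: an arbitrary index set need not be an island, so one must exhibit a convex set $C$ with $P\cap C=A'$. I would obtain $C$ by rotating a supporting cone at $p_0$ to separate a radial prefix and then perturbing it, using general position, to carve out exactly the chosen points; the freedom needed both to realize $A'$ as a genuine island and to land on intersection size exactly $l$ (rather than $l\pm 1$) is what the bound $n>(k-l)(k-l+1)+k$ supplies, since it guarantees enough spare interior points to absorb a single-point adjustment of $C$ without disturbing the kept prefix. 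Verifying that this carving is always available---simultaneously controlling island-ness, the exact count $l$, and the full drop of $k-l$ in weight---is where the real work lies; the counting case is comparatively routine once the intermediate value observation is in place.
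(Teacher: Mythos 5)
Your sliding\-/window device for producing a \emph{projectable} neighbor is sound and is a pleasant variant of the paper's handling of end intervals: each $W_i=P_{i,i+k-1}$ is a projectable $k$-island, $f(i)=|A\cap W_i|$ changes by at most one per step, and any window with $f(i^\star)=l$ is automatically adjacent to the non-projectable $A$. But already here you leave a gap: the claim that some window satisfies $f(i)\le l$ is not justified by pointing to ``a region of the circular order avoided by $A$'' --- for $l\ge 1$ the points of $A$ may be spread so that no such region exists, and ruling out ``$f(i)\ge l+1$ for every $i$'' genuinely needs two counts played against each other (covering $[1,n-1]$ by roughly $n/k$ windows, each with at most $k-l-1$ points of $P\setminus A$, forces $n-1>(k-l+2)k$, while packing disjoint windows each holding at least $l+1$ points of $A$ forces $n-1<k^2/(l+1)+k$; these are incompatible). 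The statement you need is true under the hypothesis, but you have asserted rather than proved it.

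The decisive gap is the spread-out case, which is the substance of the lemma, and you concede it yourself (``where the real work lies''). Prescribing $A'$ as the $l$ lowest-indexed points of $A$ together with $k-l$ hand-picked low-index points and then hoping to realize $A'$ by ``rotating a supporting cone at $p_0$ and perturbing it'' cannot work in general: if any point of $P$ lies in the convex hull of your chosen points, no convex set whatsoever cuts out exactly $A'$, and spare points elsewhere do not help. The paper's proof is built precisely to avoid naming the added points in advance. It covers $P\setminus\{p_0\}$ by the at most $k-l+1$ maximal radial intervals containing exactly $l$ points of $A$, uses the hypothesis $n>(k-l)(k-l+1)+k$ to pigeonhole an interval $I$ with at least $k-l$ points of $P\setminus A$, sets $J=A\cap I$ (an island, because $\conv(J)\subseteq\conv(A)$ and $\conv(J)$ stays inside the radial cone of $I$), and takes $B$ to be the $k-l$ points of $I\setminus A$ \emph{closest to} $\conv(J)$; closeness is exactly what guarantees $J\cup B$ is again an island, confinement to one interior interval is what yields the weight drop of $k-l$, and the two end intervals are where the projectable alternative arises. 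Your proposal lacks this adaptive choice, so the construction at its core is unverified and, as described, would fail; it does not constitute a proof.
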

\begin{proof}

Let the elements of $A$ 
different from $p_0$ be $p_{i_1}, \dots ,p_{i_m}$.
Thus $m$ is equal to $k$ or to $k-1$ depending
on whether $A$ contains $p_0$ or not.
Consider all maximal intervals of $P \setminus \{p_0\}$
containing exactly $l$ elements of $A$.
(That is maximal sets of consecutive elements of $P\setminus \{p_0\}$ containing
exactly $l$ elements of $A$.)

We distinguish two of these intervals: the one containing the first point
of $P \setminus \{p_0\}$ and the one containing the last. 
We refer to them as \emph{end intervals}, and to the rest as \emph{interior intervals}.

Note that there are at most $k-l+1$  such intervals and that
every element of $P \setminus \{p_0\}$
is in at least one of them. Since $n > (k-l)(k-l+1)+k$, 
one of these intervals, $I$, must contain
at least $(k-l)$ points of $P\setminus A$.

Suppose that $I$ is an interior interval. Let $J:=A\cap I$. If $J$ is non
empty let
$B$  be the set of the
$k-l$ points of $I\setminus A$
closest\footnote{The distance between $Conv(J)$  
and a point $p \notin Conv(J)$, is defined as the 
length of the shortest line segment having $p$ 
and a point of $Conv(J)$ as endpoints.} to  $\conv(J)$. 
If $J$ is empty then let $B$ be any $k$-island contained
in $I\setminus A$.  Then $J\cup B$ is a $k$-island adjacent to $A$ in $IJ(P,k,l)$, and its weight
is smaller than the weight of $A$ by at least
$k-l$.

Now suppose that $I$ is an end interval, let $p_S$ and $p_E$
be the first and last elements in $A \cap I$.
If $[p_S,p_E]$ contains at least $k-l$
elements of $P \setminus A$, then  proceed
as with interior intervals.
Otherwise, there are $r < k-l$ points
of $P\setminus A$ in $I$. If
$I$ is the first interval, then  let  $B$ 
be the $k-l-r$ points previous  to
$p_S$ in $P\setminus\{p_0\}$. If $I$ is the last interval, then
let $B$ be the $k-l-r$ points after
$p_E$. Note that in either case, $[p_S,p_E] \cup B$
is a projectable island adjacent
to $A$.
\end{proof}

We are ready to finish the proof of Theorem \ref{teo:main}.\\ \newline
{\bf Theorem  \ref{teo:main}} \emph{If $n > (k-l)(k-l+1)+k$, then $IJ(P,k,l)$
is connected and its
diameter is $\bigO{\frac{n}{k-l}}+\bigO{k-l}$.}
\begin{proof}
Let $A$ and $B$ be $k$-islands of $P$.  We apply
Lemma \ref{lem:shrk} successively
to find a sequence of consecutive
adjacent islands
$A=A_0,A_1,\dots,A_m$ and
$B=B_0,B_1,\dots,B_{m'}$,
in which each element has weight
smaller than the previous by at least
$k-l$, and the last element is a
projectable island. Since the weight of the initial
terms is at most $n$,
these sequences have length $\bigO{\frac{n}{k-l}}$.

As noted before the subgraph induced by the
projectable islands is isomorphic to
$IJ(S',k,l)$. Simple
arithmetic shows that if $n > (k-l)(k-l+1)+k$, then
$n > 3k-2l-2$. Thus this subgraph
is connected and has diameter
$\bigO{\frac {n-k}{k-l}}+\bigO{k-l}$ (Theorem \ref{G_kl_S'_connected}
and Proposition \ref{prop:diamL}). Hence the diameter of $IJ(P,k,l)$ is $\bigO{\frac{n}{k-l}}+\bigO{k-l}$ as claimed.
\end{proof}


\section{Bounds}\label{sec:diam}

\subsection{Upper bound}

In this section, for the case when $l \le k/2$, we improve the
upper bound on the diameter of $IJ(P,k,l)$ given in Theorem \ref{teo:main}.
We use a \emph{divide and conquer} strategy. 
Let $A$ and $B$ be two vertices of $IJ(P,k,l)$.
First we find a neighbor of $A$ and a neighbor of $B$;
discarding half of the points of $P$ in the process.
We iterate on the new found neighbors. Just before $P$ has 
very few points and $IJ(P,k,l)$ may be disconnected;
we apply Theorem \ref{teo:main}.

The following lemma provides the divide
and conquer part of the argument.
The proof uses some of the ideas
of the proof of the Shrinking Lemma.

\begin{lemma}\label{lem:divandconq}
Let $A$ and $B$ be two vertices of $IJ(P,k,l)$.
If $n \ge 2((k-l)(k-l+1)+k)$, and $l \le k/2$, then
there exists a closed halfplane, $H$, containing
at most $n/2$ and at least $(k-l)(k-l+1)+k$ points of $P$. With the additional
property that $A$ and $B$, each have a neighbor contained entirely in $H$.
\end{lemma}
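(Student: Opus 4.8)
The plan is to produce a halfplane $H$ whose boundary is a line $\ell$ that cuts $P$ roughly in half, and then to show that on the $H$-side of $\ell$ we have enough room to ``pull'' both $A$ and $B$ into $H$ by a single edge each, reusing the Shrinking Lemma mechanism. First I would obtain the bisecting line. By a standard ham-sandwich / rotating-line argument, since $P$ is in general position, there is a directed line $\ell$ with exactly $\lfloor n/2\rfloor$ points of $P$ strictly on its left; let $H$ be the closed halfplane to the left of $\ell$. This immediately gives $|P\cap H|\le n/2$, and since $n\ge 2((k-l)(k-l+1)+k)$ we also get $|P\cap H|\ge (k-l)(k-l+1)+k$, which is the point count the Shrinking Lemma needs. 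I may need to perturb $\ell$ slightly (keeping the same open halfplane) so that it passes through no point of $P$; general position makes this harmless.

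The core of the argument is to show that $A$ has a neighbor entirely inside $H$ (and symmetrically for $B$). Here is where the hypothesis $l\le k/2$ does the work. I would look at $A\cap H$ and $A\setminus H$. The idea is to keep exactly $l$ of $A$'s points and replace the remaining $k-l$ points by fresh points of $(P\cap H)\setminus A$, chosen so that the whole new $k$-set is an island and lies in $H$. Concretely, I would retain the $l$ points of $A$ that are deepest inside $H$ (closest to $\ell$ from the $H$-side, or simply the $l$ points of $A\cap H$ nearest $\ell$), call this set $J$ with $|J|=l$ provided $|A\cap H|\ge l$; then, exactly as in the Shrinking Lemma, take $B'$ to be the $k-l$ points of $(P\cap H)\setminus A$ closest to $\conv(J)$, so that $J\cup B'$ is a $k$-island meeting $A$ in exactly $l$ points and hence adjacent to $A$ in $IJ(P,k,l)$. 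Because $H$ contains at least $(k-l)(k-l+1)+k\ge k$ points and $A$ contributes at most $k$ of them, the supply of fresh points of $(P\cap H)\setminus A$ is large enough to find these $k-l$ replacements; the island/convexity property follows from the same ``closest to $\conv(J)$'' selection used in Lemma~\ref{lem:shrk}.

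The main obstacle, and the place where $l\le k/2$ is genuinely needed, is guaranteeing that $A\cap H$ actually contains at least $l$ points so that $J$ can be formed. Since $\ell$ bisects $P$, it need not bisect $A$; in the worst case almost all of $A$ could lie on the far side of $\ell$. To handle this I would argue on both sides simultaneously: $|A\cap H|+|A\setminus H|=k$, so at least one of the two sides of $\ell$ contains $\ge \lceil k/2\rceil \ge l$ points of $A$ (using $l\le k/2$). If the $H$-side already has $\ge l$ points of $A$ we proceed as above; if instead the far side is the heavy one, I would simply orient the construction so that $H$ is taken to be whichever closed halfplane contains at least $\lceil k/2\rceil\ge l$ points of $A$ — and do the same for $B$. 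The delicate point is that we need a single halfplane $H$ working for both $A$ and $B$ at once. I would resolve this by choosing the direction of the bisecting line first and then noting that among the two opposite closed halfplanes determined by $\ell$, one can be selected that is ``$l$-heavy'' for $A$ and, by an independent choice of cutting direction if necessary, for $B$; the two-parameter freedom in choosing the orientation of $\ell$ gives enough flexibility, and this is precisely the step I expect to require the most care.

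Once $H$ is fixed and both replacement islands $J_A\cup B'_A$ and $J_B\cup B'_B$ are constructed inside $H$, the conclusion is immediate: each is adjacent to $A$ (resp.\ $B$) by construction, each lies entirely in $H$, and the count bounds on $|P\cap H|$ hold by the bisecting property together with $n\ge 2((k-l)(k-l+1)+k)$. This yields exactly the statement of Lemma~\ref{lem:divandconq}, and the recursion on $H$ (halving the point set each time) is what the subsequent $\bigO{\log n}$ diameter bound will exploit.
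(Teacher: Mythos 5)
There is a genuine gap, and it sits exactly at the step you yourself flag as ``the step I expect to require the most care.'' You first force $\ell$ to bisect $P$ and only afterwards try to arrange that the chosen closed halfplane contains at least $l$ points of each of $A$ and $B$. This order of quantifiers cannot be repaired by ``the two-parameter freedom in choosing the orientation of $\ell$'': a line in the plane has two degrees of freedom, and insisting that it bisect $P$ consumes one of them (for each direction there is essentially one $P$-bisecting line), so a rotating-line argument can additionally control only \emph{one} more set, not both $A$ and $B$. Indeed, for a fixed $P$-bisecting family $\ell_\theta$ the set of directions that are bad for $A$ and the set bad for $B$ can together cover all directions (each set merely avoids antipodal pairs, and two such sets can partition the circle), so no choice of orientation need work for both islands simultaneously. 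The paper avoids this entirely by reversing the roles: it applies the ham-sandwich theorem to the two point sets $A$ and $B$ themselves, obtaining a single line $\ell$ such that \emph{each} closed halfplane contains $\lceil k/2\rceil\ge l$ points of $A$ and $\lceil k/2\rceil\ge l$ points of $B$; it then takes $H$ to be whichever side has at most $n/2$ points of $P$, and if that side has fewer than $(k-l)(k-l+1)+k$ points of $P$ it translates $\ell$ parallel outward until it does --- translation only adds points to $H$, so the $A$- and $B$-counts are preserved, and $n\ge 2((k-l)(k-l+1)+k)$ guarantees $|P\cap H|\le n/2$ still holds when the translation stops. The constraint on $|P\cap H|$ thus comes for free; it is never imposed as a bisection condition.

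A secondary, fixable issue: your construction of the neighbor of $A$ inside $H$ takes $J$ to be the $l$ points of $A\cap H$ nearest $\ell$ and then the $k-l$ points of $(P\cap H)\setminus A$ closest to $\conv(J)$. Without the interval structure of Lemma~\ref{lem:shrk} this need not produce an island meeting $A$ in exactly $l$ points: the nearest non-$A$ points may lie beyond further points of $A$, so the convex hull of $J\cup B'$ can swallow extra elements of $A$. The paper instead sorts $P\cap H$ by distance to $\ell$, considers the maximal intervals of this order containing exactly $l$ consecutive elements of $A$ (there is at least one because $|A\cap H|\ge \lceil k/2\rceil\ge l$), and runs the pigeonhole-plus-closest-points argument of the Shrinking Lemma inside such an interval; that is what guarantees both the island property and the exact intersection size $l$.
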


\begin{proof}
We use the ham-sandwich theorem to
find a line $\ell$ so that each of the two closed halfplanes
bounded by $\ell$ contain $\lceil k/2 \rceil$
points of $A$  and  $\lceil k/2 \rceil$
 points of $B$.

Without loss of generality suppose that the halfplane $H$ above $\ell$ contains
at most $n/2$ points of $P$. If $H$, however,
does not contain at least $(k-l)(k-l+1)+k$ points of $P$,
move $\ell$ parallel down until it does. In this
case $H$ would contain at least as many points
of $A$ and $B$ as it previously did and
since we are assuming that $n \ge 2((k-l)(k-l+1)+k)$, it
still contains at most $n/2$ points of $P$.

We will now show the existence of a neighbor
of $A$ in $IJ(P,k,l)$ with the desired properties.
The corresponding neighbor of $B$ can be found
in a similar way. Let $P':=P \cap H$ and sort its elements
by distance to $\ell$. As
in the proof of Lemma \ref{lem:shrk}
we consider maximal intervals of $P'$
containing exactly $l$ consecutive elements of $A$.
There is at least one such interval, given that
$H$ contains at least $k/2$ points of $A$ and
that we are assuming $l\le k/2$. The rest
of the proof employs the same arguments
as the proof of Lemma \ref{lem:shrk} to
find a neighbor of $A$ contained in one of
these intervals.

\end{proof}

\begin{theorem} \label{thm:upb}
If $n \ge 2((k-l)(k-l+1)+k)$ and $l \leq k/2$, then the diameter
of $IJ(P,k,l)$ is $\bigO{\log n}+\bigO{k-l}$.
\end{theorem}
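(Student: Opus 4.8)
The plan is to run the divide-and-conquer scheme set up by Lemma~\ref{lem:divandconq} as a recursion, tracking how the point count shrinks and how many edges we spend at each level. Fix two vertices $A$ and $B$ of $IJ(P,k,l)$. As long as the current point set $P$ satisfies $n \ge 2((k-l)(k-l+1)+k)$, Lemma~\ref{lem:divandconq} furnishes a closed halfplane $H$ with $(k-l)(k-l+1)+k \le |P \cap H| \le n/2$, together with a neighbor $A'$ of $A$ and a neighbor $B'$ of $B$ both lying entirely inside $H$. Since $A'$ and $B'$ are islands of the smaller set $P \cap H$ (any convex set realizing them inside $H$ realizes them in $P \cap H$, and conversely adjacency is inherited), I would recurse on $(P \cap H, A', B')$. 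Each recursive step costs exactly one edge on the $A$-side and one on the $B$-side, and halves the number of points.

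First I would bound the number of recursion levels. Because $|P \cap H| \le n/2$ at every step, after $t$ levels the surviving point set has at most $n/2^{t}$ points; the recursion continues only while this exceeds the threshold $2((k-l)(k-l+1)+k)$, so the number of levels is at most $\lceil \log_2 n \rceil = \bigO{\log n}$. The total number of edges spent on both sides combined during the recursion is therefore $\bigO{\log n}$, and at the end we have walked from $A$ to some island $A^\ast$ and from $B$ to some island $B^\ast$, where both $A^\ast$ and $B^\ast$ are $k$-islands of a common point set $Q := P \cap H^\ast$ of size $|Q| < 2((k-l)(k-l+1)+k)$ but $|Q| \ge (k-l)(k-l+1)+k$ (the lower bound is guaranteed at the last successful application of the lemma).

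The base case is then handled by Theorem~\ref{teo:main}: since $|Q| > (k-l)(k-l+1)+k$, the graph $IJ(Q,k,l)$ is connected with diameter $\bigO{\frac{|Q|}{k-l}}+\bigO{k-l}$. But $|Q| = \bigO{(k-l)(k-l+1)+k} = \bigO{(k-l)^2 + k}$, so $\bigO{\frac{|Q|}{k-l}} = \bigO{(k-l) + \frac{k}{k-l}} = \bigO{k-l}$ (using $\frac{k}{k-l} \le k - l + 1$ when, say, $k - l \ge 1$, or absorbing the bounded term otherwise). Hence $A^\ast$ and $B^\ast$ are joined by a path of length $\bigO{k-l}$ inside $IJ(Q,k,l)$, which is a subgraph of $IJ(P,k,l)$. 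Concatenating the two recursive walks of total length $\bigO{\log n}$ with this base-case path of length $\bigO{k-l}$ yields a path from $A$ to $B$ of length $\bigO{\log n}+\bigO{k-l}$, giving the claimed diameter bound.

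The main obstacle I anticipate is not the counting of levels, which is routine, but making precise the claim that a neighbor found inside $H$ is genuinely an island of the \emph{restricted} set $P \cap H$ so that the recursion is well-founded, and that adjacency (intersection of size exactly $l$) is preserved when we pass between $IJ(P \cap H, k, l)$ and $IJ(P,k,l)$. Care is also needed to verify that the hypothesis $n \ge 2((k-l)(k-l+1)+k)$ of Lemma~\ref{lem:divandconq} holds at \emph{every} recursion level before the base case, and that the threshold at which we stop recursing is exactly where Theorem~\ref{teo:main} becomes applicable; the lower bound $|Q| \ge (k-l)(k-l+1)+k$ on the final set, inherited from the lemma's guarantee on $|P \cap H|$, is what bridges these two hypotheses cleanly.
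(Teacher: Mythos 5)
Your proposal is correct and follows essentially the same divide-and-conquer argument as the paper: iterate Lemma~\ref{lem:divandconq} for $\bigO{\log n}$ halving steps, then invoke Theorem~\ref{teo:main} on the final point set of size $\Theta\left((k-l)^2+k\right)$ to get the residual $\bigO{k-l}$ path, and concatenate. The subtleties you flag (islands of $P\cap H$ being islands of $P$ since $H$ is convex, and the size bounds matching the hypotheses of the base case) are exactly the right ones and resolve as you anticipate.
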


\begin{proof}
Consider the following algorithm. Let $A$ and $B$ be two $k$-islands
of $P$. We start by setting
$A_0:=A$, $B_0:=B$, $P_0:=P$, $n_0:=n$.
While $n_{i} \ge 2((k-l)(k-l+1)+k)$, we apply Lemma
\ref{lem:divandconq} to $P_i$,
$A_i$, and $B_i$.
At each step we obtain a closed halfplane $H_i$
containing  at most $n_i/2$
and at least $(k-l)(k-l+1)+k$ points of $P_i$,
with the additional property that
both $A_i$ and $B_i$ have neighbors
$A_{i+1}$ and $B_{i+1}$ 
contained entirely in $H_i$.
We set $P_{i+1}:=H_i \cap P_i$,
$n_{i+1}:=|P_{i+1}|$,
and continue the iteration. We can  do this procedure at most $\bigO{\log n}$
times.
In the last iteration, we have
a point set $P_m$ with fewer than
$2((k-l)(k-l+1)+k)$ and
at least $(k-l)(k-l+1)+k$ elements.
The islands $A_m$ and $B_m$ are both
contained in $P_m$, and are joined
by paths of length $\bigO{\log n}$
to $A$ and $B$ respectively.
We apply Theorem
\ref{teo:main} to obtain a path
of length at most $\bigO{k-l}$ from
$A_m$ to $B_m$. Concatenating the three paths
we obtain a path of length
$\bigO{\log n}+\bigO{k-l}$
from $A$ to $B$ in $IJ(P,k,l)$.
\end{proof}

\subsection{Lower bound}\label{sec:lower_bound}

For the lower bound we
use Horton sets~\cite{horton}.
We base our exposition on~\cite{lectures}.
Let $X$ and $Y$ be two point sets 
in the plane. We say
that $X$ is \emph{high above} $Y$ (and that $Y$ is
\emph{deep below} $X$), if the following conditions are met:

\begin{itemize}
\item No line passing through a pair of points of $X\cup Y$ is
vertical.
\item Each line passing through a pair of points of $X$ lies above
all the points of $Y$.

\item Each line passing through a pair of points of $Y$ lies below
all the points of $X$.
\end{itemize}

For a set $X=\{x_1,x_2,\dots,x_n\}$ of points 
in the plane with no two points having the same
$x$-coordinate and with the indices chosen
so that the $x$-coordinate of $x_i$
increases with $i$, we define the sets
$X_0=\{x_2,x_4,\dots\}$ (consisting of the
points with even indices) and 
$X_1=\{x_1,x_3,\dots\}$ (consisting of the
points with odd indices). Thus $X_{00}=\{x_4,x_8,\dots\}$,
$X_{01}=\{x_2,x_6,\dots\}$, $X_{10}=\{x_3,x_7,\dots\}$ and
$X_{11}=\{x_1,x_5,\dots\}$.

\begin{defini}
A finite set of points $H_0$, with not two 
of points having the same $x$-coordinate, is said to be a Horton
set if $|H_0|\le 1$, or the the following conditions
are met:
\begin{itemize}
\item Both $H_{00}$ and $H_{01}$ are Horton sets.
\item $H_{00}$ is high above $H_{01}$.
\end{itemize}
\end{defini}

Horton sets of any size were shown to exist
in \cite{horton}. We remark that in~\cite{lectures}, in the definition of
Horton sets, the second condition is that
``$H_{00}$ is high above $H_{01}$ or $H_{01}$ is high above $H_{00}$''.
For our purposes we need to fix one of these two options.

Let $H_0:=\{x_1, \ldots , x_n\}$ be a Horton set of $n$ points.
Given an island $A$ of $H_0$, we define
its \emph{depth}, $\delta(A)$, to be
the length of the longest string $s:=00\dots0$ of all zeros such that $H_s$
contains $A$. Thus for example, $\delta(x_1)=1, \delta(x_2)=2, \delta(x_3)=1, \delta(x_4)=3, \cdots$~;
refer to Figure~\ref{depth}. Note that the depth of an
island is the depth of its shallowest
point. 

\begin{center}
\begin{figure}[h]
  \includegraphics[scale=0.6]{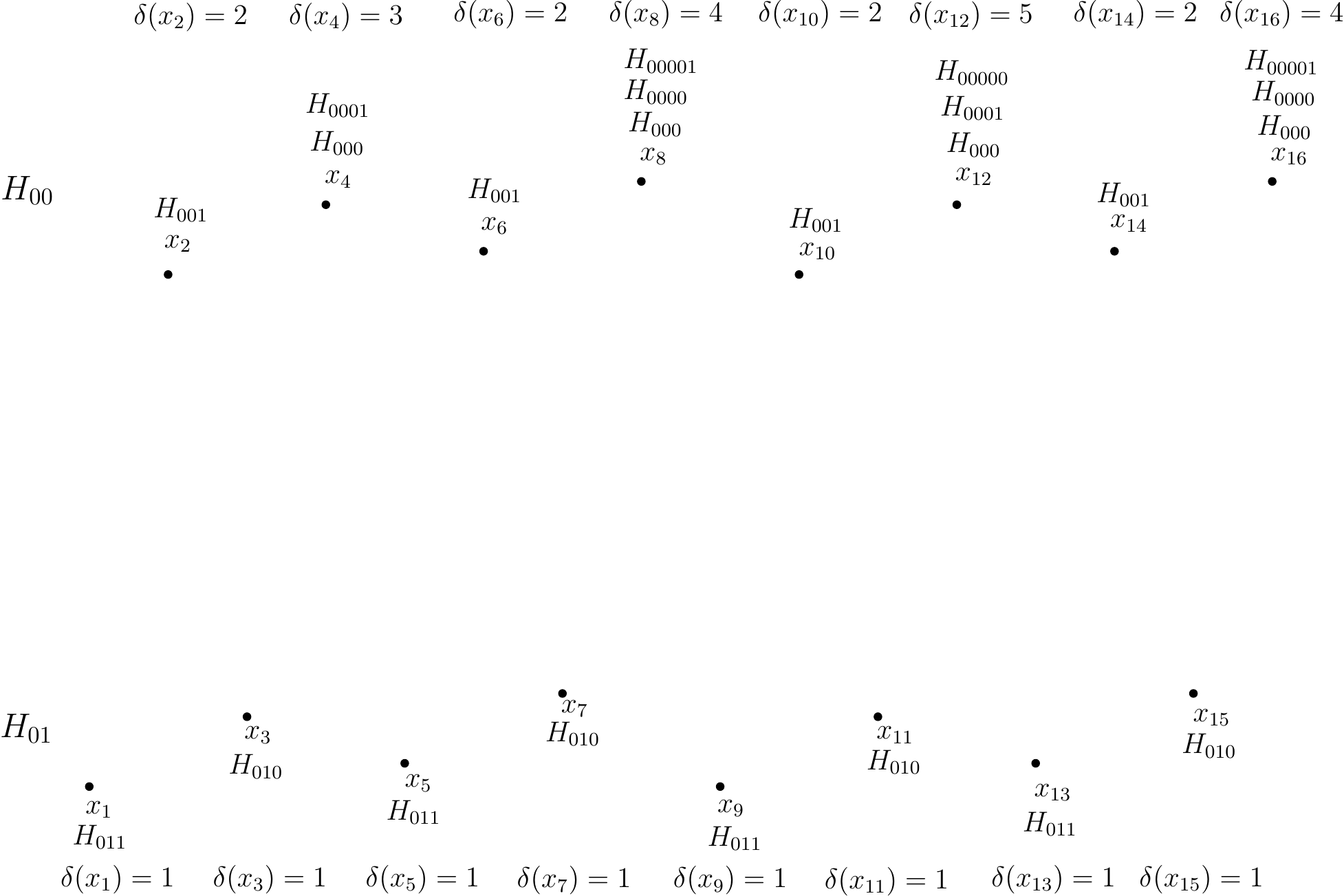}
  \caption{A Horton set with $16$ points, and the depth of its elements.}\label{depth}
\end{figure}
\end{center}

\begin{lemma} \label{lem:depth}
Let $x$ and  $y$ be two points of $H_0$, such
that $x$ is to the left of $y$,
and $z$ is  a point with depth
less than $\delta(\{x,y\})$. Then
the island $H_0 \cap \conv(\{x,y,z\})$
contains at least $2^{\delta(\{x,y\})-\delta(z)-1}-1$
points with depth greater than that of  $z$,
and lying in between $x$ and $y$.
\end{lemma}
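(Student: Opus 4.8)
The plan is to exploit the recursive structure of the Horton set together with the \emph{high above}/\emph{deep below} separation to show that the convex hull of $\{x,y,z\}$ captures a whole deep subset lying between $x$ and $y$. First I would set $d:=\delta(\{x,y\})$, so that by definition $x$ and $y$ both lie in $H_s$ for the all-zeros string $s$ of length $d$, and at least one of them lies in the ``deep'' half $H_{s0}$ of that set (otherwise the common depth would exceed $d$). The key observation I would establish is that the segment $\overline{xy}$, viewed relative to the set $H_s$, passes \emph{above} the entire shallow half $H_{s1}=H_{s\ldots1}$ at the level where $x$ and $y$ first separate. Concretely, at the deepest level where both $x$ and $y$ still sit in a common zero-block, the two points straddle the split into $H_{\cdot\cdot 0}$ and $H_{\cdot\cdot 1}$, and since $H_{\cdot\cdot 0}$ is high above $H_{\cdot\cdot 1}$, the line through $x$ and $y$ lies above all of the shallow part. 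This means the shallow part is trapped underneath the chord $\overline{xy}$.

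The heart of the argument is then to locate $z$. Because $\delta(z)<d$, the point $z$ lies in a shallower block, and I would argue that $z$ sits \emph{below} the relevant shallow subset of $H_s$ lying between $x$ and $y$. Using the \emph{deep below} condition in the opposite direction, the line through $x$ and $y$ lies above those intermediate shallow points, while $z$, being even shallower, lies below them; hence $\conv(\{x,y,z\})$ contains the region between $\overline{xy}$ and $z$, which in particular swallows an entire Horton subset $H_t$ of depth one greater than $\delta(z)$ whose $x$-coordinates fall strictly between those of $x$ and $y$. The points of that captured subset all have depth greater than $\delta(z)$ by construction, which is exactly what we must count.

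For the counting step I would track how many levels of the recursion lie strictly between depth $\delta(z)$ and depth $d$. The captured subset is (a portion of) a Horton set obtained by descending $d-\delta(z)-1$ further levels, and a full Horton block at that relative depth contains on the order of $2^{d-\delta(z)-1}$ points; restricting to those whose $x$-coordinate lies between $x$ and $y$, and subtracting a constant for boundary effects, yields the stated bound of $2^{\delta(\{x,y\})-\delta(z)-1}-1$. I expect the main obstacle to be the geometric containment claim rather than the counting: one must verify carefully, using only the three defining conditions of \emph{high above}, that the chord $\overline{xy}$ really does pass above \emph{all} the intermediate deep points and that $z$'s shallowness forces it below them, so that the triangle $\conv(\{x,y,z\})$ encloses the intervening block. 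The off-by-one bookkeeping between depth, string length, and the halving of the index sets (as in the $X_{00},X_{01},X_{10},X_{11}$ notation) will require attention to make the exponent come out exactly, but that is routine once the separation geometry is pinned down.
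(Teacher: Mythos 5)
Your overall strategy---use the high-above/deep-below separation to show that $\conv(\{x,y,z\})$ captures the intermediate-depth points lying horizontally between $x$ and $y$, then count them directly---is different from the paper's, which inducts on $r=\delta(\{x,y\})-\delta(z)$: it finds one point of $H_{s01}$ between $x$ and $y$ inside the triangle, recurses on the smaller triangle that point spans with $x$ and $y$, and then doubles the count by pairing each recovered point with its right neighbour in $H_{s0}$. A direct argument in your spirit can in fact be completed, but as written your plan has genuine gaps. First, the separation statement you invoke is the wrong one: if $x$ and $y$ ``straddle the split'' into $H_{\cdots 0}$ and $H_{\cdots 1}$, the high-above condition says nothing about the line through them, since it only controls lines through pairs of points of the \emph{same} half. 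What you need is that for every $j$ with $\delta(z)\le j<d:=\delta(\{x,y\})$, both $x$ and $y$ lie in $H_{0^{j+1}}=H_{0^{j}0}$, so the chord $\overline{xy}$ passes above every point of $H_{0^{j}1}$, i.e.\ above every point of depth exactly $j$.

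Second, and more seriously, being below the chord $\overline{xy}$ while $z$ is ``even lower'' does not put a point inside the triangle: $\conv(\{x,y,z\})$ is not ``the region between $\overline{xy}$ and $z$''. To place an intermediate point $w$ in the triangle you must also show it lies on the correct side of the edges $xz$ and $yz$; this needs a second application of the high-above condition, at level $\delta(z)$: since $x$, $y$, $w$ all lie in $H_{0^{\delta(z)}0}$ while $z\in H_{0^{\delta(z)}1}$, the lines $xw$ and $yw$ pass above $z$, and only together with ``$w$ below $\overline{xy}$ and horizontally between $x$ and $y$'' does this force $w\in\conv(\{x,y,z\})$. You explicitly defer this containment as ``the main obstacle,'' but it is the entire content of the lemma. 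Third, your count is short by a factor of two: the points of depth exactly $\delta(z)+1$ between two consecutive points of $H_{0^{d}}$ number only about $2^{d-\delta(z)-2}$; to reach $2^{d-\delta(z)-1}-1$ you must count the points of \emph{all} depths strictly between $\delta(z)$ and $d$ (and you cannot instead take all of $H_{0^{\delta(z)+1}}$ between $x$ and $y$, because its points of depth $\ge d$ lie \emph{above} the chord and are not in the triangle). With these repairs your non-inductive route works and is arguably cleaner than the paper's recursion, but in its present form the plan asserts the key geometric step rather than proving it.
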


\begin{proof}

Let $A:=H_0\cap \conv(\{x,y,z\})$.
We will proceed by induction on
$r=\delta(\{x,y\})-\delta(z)$.
If $r=1$ there is nothing to prove,
since $2^{r-1}-1=0$. Assume then that $r>1$. Let $s$ be the 
unique string of all zeros, such that
$H_s$ contains $A$ but $H_{s0}$ and $H_{s1}$ do not. Note that $z$ lies
in $H_{s1}$ while $x$ and $y$ both lie in $H_{s0}$;
actually since we are assuming $r>1$, they both
lie in $H_{s00}$. Consider the set
 $H_{s0}$, since it 
is a Horton set, $A$ contains at least a point in
$H_{s01}$, between $x$ and $y$. Of all such points,
choose $x_{k' }$ to be the shallowest.
The depth of $x_{k'}$ is one more than that 
of $z$. By induction, the island
$H_0 \cap \conv(\{x,y,x_{k'}\})$ contains
a set $I$ of at least $2^{r-2}-1$ points. These points have
depth greater than that of $x_{k'}$ (thus contained in $H_{s00}$) and between $x$ and $y$.
Therefore $I$ is contained in $A$.
For each point in $I$, consider the next point $x_m$ to its right
in $H_{s0}$. This point must be in $H_{s01}$ and $\delta(z) < \delta(x_m)$.
Thus we have $2^{r-2}-1$ additional points in $A$.
Note that the point to the right of $x$ in $H_{s0}$
is not in the previous counting.
Therefore $A$ has at least
$2^{r-2}-1+2^{r-2}-1+1=2^{r-1}-1$ points with
depth greater than that of
$z$, and lying between $x$ and $y$.
\end{proof}

\begin{lemma}\label{lem:n_depth}
If $A$ and $B$ are two adjacent islands
in $IJ(H_0,k,l)$ (with $l \ge 2$), then
their depths differ by at most $\bigO{\log(k-l)}$.
\end{lemma}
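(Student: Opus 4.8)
The plan is to argue that if the depths of two adjacent islands differ a lot, then the shallower island would be forced to contain too many points, contradicting $|A|=k$. Assume without loss of generality that $a:=\delta(A)\le b:=\delta(B)$, and that $a<b$ (otherwise there is nothing to prove). Since $\delta(B)=b$ is the depth of the shallowest point of $B$, every point of $B$ --- and in particular every point of the intersection $A\cap B$ --- has depth at least $b$.

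The main idea is to apply Lemma~\ref{lem:depth} to a carefully chosen triple of points of $A$. Let $z$ be a point of $A$ realizing the depth of $A$, so $\delta(z)=a$; since $a<b$, the point $z$ lies in $A\setminus B$. Because $l\ge 2$, the set $A\cap B$ contains at least two points; order the $l$ points of $A\cap B$ by $x$-coordinate (these are well defined, as $H_0$ has no two points sharing an $x$-coordinate) and let $x,y$ be any two \emph{consecutive} ones, with $x$ to the left of $y$. Both lie in $B$, so $\delta(\{x,y\})=\min\{\delta(x),\delta(y)\}\ge b>a=\delta(z)$, and the hypotheses of Lemma~\ref{lem:depth} are met.

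Applying the lemma, the island $H_0\cap\conv(\{x,y,z\})$ contains at least $2^{\delta(\{x,y\})-\delta(z)-1}-1\ge 2^{b-a-1}-1$ points of depth greater than $a$ lying strictly between $x$ and $y$. Since $A$ is an island, $A=H_0\cap C$ for some convex $C\supseteq\{x,y,z\}$, whence $H_0\cap\conv(\{x,y,z\})\subseteq A$, so all these points belong to $A$. The crucial observation is that they lie strictly between $x$ and $y$, which are consecutive in $A\cap B$: therefore none of them lies in $A\cap B$, and they all belong to $A\setminus B$. Consequently $k-l=|A\setminus B|\ge 2^{b-a-1}-1$, which gives $b-a\le \log_2(k-l+1)+1=\bigO{\log(k-l)}$, as claimed.

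The step I expect to be the crux is the choice of $x$ and $y$ as consecutive points of $A\cap B$ rather than, say, its extreme points. Any pair of intersection points has depth at least $b$ and would yield $\bigO{\log k}$ via the same counting, but only consecutiveness guarantees that the points produced by Lemma~\ref{lem:depth} avoid $A\cap B$ entirely and hence count toward the $k-l$ points of $A\setminus B$. This refinement is exactly what sharpens the bound from $\bigO{\log k}$ to the desired $\bigO{\log(k-l)}$.
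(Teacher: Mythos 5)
Your proof is correct and follows the same skeleton as the paper's: apply Lemma~\ref{lem:depth} to a triangle whose two ``deep'' vertices lie in $A\cap B$ and whose third vertex is the shallowest point of the shallower island, and then charge the points that the lemma produces to the $k-l$ elements of the relevant set difference, giving $2^{|\delta(A)-\delta(B)|-1}-1\le k-l$. The one place where you genuinely diverge is exactly the step you flag as the crux: how to guarantee that the produced points avoid $A\cap B$. The paper chooses $x,y$ to be the endpoints of an edge of the convex hull of $C:=A\cap B$ whose supporting line separates $C$ from $z$, so the points of $H_0\cap\conv(\{x,y,z\})$ cannot lie in $C$ by a separation argument. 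You instead choose $x,y$ consecutive by $x$-coordinate among the $l\ge 2$ points of $A\cap B$ and let the ``lying in between $x$ and $y$'' clause of Lemma~\ref{lem:depth} do the avoidance for free; this is a touch more elementary, since it uses only the linear order on $H_0$ and no convex-hull or separating-line geometry. Both variants consume the hypothesis $l\ge 2$ in the same way (one needs either an edge of $\conv(C)$ or a consecutive pair in $C$), and both deliver the same quantitative bound, so yours is a correct and slightly cleaner realization of the same idea.
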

\begin{proof}
Without loss of generality assume that
the depth of $A$ is greater than
the depth of $B$.
Let $C$ be the island $A \cap B$.
Note that the depth of $C$ is at least
the depth of $A$.
If $z$ is the shallowest point of
$B$, then $\delta(z)=\delta(B)$, and this point has depth less than 
$\delta(C)$. Consider an edge of the convex hull of $C$,
whose supporting line separates $C$ and $z$. Let
$x$ and $y$ be its endpoints.
Then by Lemma \ref{lem:depth}, the island $H_{0} \cap \conv(\{x,y,z\})$
contains at least $2^{\delta(\{x,y\})-\delta(z)-1}-1\ge 2^{\delta(A)-\delta(B)-1}-1$
points, none of which is in $C$. However, since these points 
do lie in $B$, there are at most $k-l$ of them. Therefore
$\delta(A)-\delta(B)$ is $\bigO{\log(k-l)}$ as claimed.
\end{proof}

\begin{theorem} \label{thm:lwb}
The diameter of $IJ(H_{0},k,l)$ for $l\ge 2$  is
$\Omega\left(\frac{\log(n)-\log(k)}{\log(k-l)}\right)$.

\end{theorem}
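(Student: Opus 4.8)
The plan is to exhibit two $k$-islands of $H_0$ whose depths differ by $\Omega(\log n-\log k)$, and then let Lemma~\ref{lem:n_depth} convert this depth gap into a lower bound on their graph distance. Concretely, suppose $A$ and $B$ are $k$-islands with $\delta(A)-\delta(B)\ge D$. Along any path $A=C_0,C_1,\dots,C_t=B$ in $IJ(H_0,k,l)$, Lemma~\ref{lem:n_depth} guarantees that each step changes the depth by at most $c\log(k-l)$ for some absolute constant $c$, so the total change satisfies $D\le \delta(A)-\delta(B)\le t\cdot c\log(k-l)$, giving $t\ge D/(c\log(k-l))$. Since $A$ and $B$ lie in the same component for $n$ large (Theorem~\ref{teo:main}), their distance lower-bounds the diameter. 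Hence the whole problem reduces to producing one very deep $k$-island and one shallow one, with depth gap $D=\Omega(\log n-\log k)$.

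The shallow island is immediate: the $k$ leftmost points $\{x_1,\dots,x_k\}$ form an island of $H_0$, since they are cut off by a vertical halfplane (a convex set), and this island contains $x_1$, so its depth is $1$. The real work, and the step I expect to be the \textbf{main obstacle}, is constructing a $k$-island of depth $d$ with $d=\Omega(\log n-\log k)$: one must rule out shallow points sneaking into its convex hull. The key structural fact I would isolate is that the set $H_{0^d}$ of points of depth at least $d$ lies \emph{high above} every shallower point. Unfolding the recursive definition, $H_{0^{j+1}}$ is high above the points of depth exactly $j$; and since $H_{0^d}\subseteq H_{0^{j+1}}$ for every $j<d$, while the three defining conditions of ``high above'' are clearly inherited by subsets of the upper set, $H_{0^d}$ lies high above each depth class below $d$, hence above all shallow points. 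Consequently, for any $T\subseteq H_{0^d}$ the supporting lines of the lower hull of $\conv(T)$ pass through pairs of $H_{0^d}$ and therefore stay above all shallow points, so $\conv(T)$ contains none of them. This gives $\conv(T)\cap H_0=\conv(T)\cap H_{0^d}$, i.e. $T$ is an island of $H_0$ exactly when it is an island of $H_{0^d}$.

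To finish, I would choose $d$ as large as possible subject to $|H_{0^d}|\ge k$. Because $H_{0^d}$ consists precisely of the points whose index is a multiple of $2^{d-1}$, we have $|H_{0^d}|=\lfloor n/2^{d-1}\rfloor$, so the maximal feasible $d$ satisfies $d\ge \log_2 n-\log_2 k$. Inside the Horton subset $H_{0^d}$ I take $A$ to be its $k$ leftmost points; these form an island of $H_{0^d}$ (again cut off by a vertical halfplane, and no point of $H_{0^d}$ to their right can enter $\conv(A)$ since $A$ occupies the leftmost $x$-coordinates), hence by the previous paragraph an island of $H_0$, with every point at depth at least $d$, so $\delta(A)\ge d$. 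Taking $B$ to be the shallow island gives $\delta(A)-\delta(B)\ge d-1=\Omega(\log n-\log k)$, and the reduction of the first paragraph yields distance, and therefore diameter, $\Omega\!\left(\frac{\log n-\log k}{\log(k-l)}\right)$. The only points needing care are the subset-inheritance of ``high above'' and the claim that no point of $H_{0^d}$ lands inside $\conv(A)$; both become routine once the high-above separation picture is in place.
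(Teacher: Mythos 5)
Your proof is correct and follows essentially the same route as the paper: take a deepest possible $k$-island and a depth-$1$ island, and use Lemma~\ref{lem:n_depth} to bound the depth change across each edge, converting the depth gap of $\Omega(\log n-\log k)$ into the stated distance lower bound. The only difference is that you explicitly construct the deep island via the ``high above'' separation of $H_{0^d}$ from all shallower points, a step the paper simply asserts by declaring that an island of depth $\lceil\log_2(n/k)\rceil$ exists.
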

\begin{proof}

Let $A$ be an island with the largest
possible depth, which is $\lceil \log_2(n/k) \rceil$.
Let $B$ be an island of depth $1$. By Lemma~\ref{lem:n_depth}
in any path joining $A$ and $B$ in
$IJ(H_{0},k,l)$, the depth of two consecutive vertices
differs by $\bigO{\log(k-l)}$. Therefore
any such path must have length $\Omega\left(\frac{\log(n)-\log(k)}{\log(k-l)}\right)$.
\end{proof}

We point out that there was an error in the proof of Theorem 11
in the  preliminary
version of this paper \cite{eurocg10}; thus the bounds
stated there are incorrect.

The diameter of the generalized Johnson graph
can be substantially different from that of the
generalized island Johnson graph.
The diameter of $GJ(n,k,l)$ is
$\bigO{k}$ when $n$ is large enough,
while the diameter of $IJ(P,k,l)$
can be $\Omega\left(\frac{\log(n)-\log(k)}{\log(k-l)}\right)$.

Determining upper and lower bounds for the
diameter of $IJ(P,k,l)$ seems to be a challenging problem when
$l > k/2$.
It might happen that there is a sharp
jump in the diameter when $l$ rises
above $k/2$. We leave the closing of this gap as an open problem.

\bibliographystyle{plain}
\bibliography{biblio}

\end{document}